\numberwithin{equation}{section}
\newcommand{\beq}{\begin{equation}}
\newcommand{\eeq}{\end{equation}}
\newcommand{\bea}{\begin{eqnarray}}
\newcommand{\eea}{\end{eqnarray}}
\newcommand{\beas}{\begin{eqnarray*}}
\newcommand{\eeas}{\end{eqnarray*}}
\newtheorem{theorem}{Theorem}[section]
\newtheorem{proposition}[theorem]{Proposition}
\newtheorem{corollary}[theorem]{Corollary}
\newtheorem{remark}[theorem]{Remark}
\newtheorem{example}[theorem]{Example}
\newtheorem{examples}[theorem]{Examples}
\newtheorem{foo}[theorem]{Remarks}
\newcommand{\bM}{\mathbb M}
\title[Eigenvalues in positive Ricci curvature] {A note on lower bounds estimates for the Neumann eigenvalues of  manifolds with positive Ricci curvature}
\author{Fabrice Baudoin}
\address{Department of Mathematics\\Purdue University \\
West Lafayette, IN 47907} \email[Fabrice Baudoin]{fbaudoin@math.purdue.edu}
\thanks{First author supported in part by
NSF Grant DMS 0907326}
\author{Alice Vatamanelu}
\address{Department of Mathematics\\Purdue University \\
West Lafayette, IN 47907} \email[Alice Vatamanelu]{vatamanelu@gmail.com}
\begin{document}

\maketitle

\begin{abstract}
We study new heat kernel estimates for the Neumann heat kernel on a  compact manifold with positive Ricci curvature and convex boundary. As a consequence, we obtain  lower bounds for the Neumann eigenvalues which are consistent with Weyl's asymptotics.
\end{abstract}

\tableofcontents

\section{Introduction}

Eigenvalues of compact Riemannian  manifolds have been extensively studied (see for instance  Chavel \cite{Chavel}, Cheng \cite{Cheng},   Li-Yau \cite{LY2}, \cite{LY}, and the references therein). In particular, it has been proved by Li and Yau \cite{LY} that for the Neumann eigenvalues of a compact Riemannian manifold with non negative Ricci curvature and convex boundary 
\[
\lambda_k \ge C(n) \frac{k^{2/n}}{D(\bM)^2},
\]
where $C(n)$ is a constant that only depends on the dimension $n$ of the manifold and where $D(\bM)$ is the diameter of $\bM$. These lower bound estimates are obtained by proving an on-diagonal upper bound for the Neumann heat kernel.  In this note, we follow the approach of Li and Yau, but use the tools introduced in Bakry-Qian \cite{BQ}, Bakry-Ledoux  \cite{BL}  and Baudoin-Garofalo \cite{BG1}  to prove new upper bounds for the Neumann heat kernel in the case where the Ricci curvature is bounded from below by a positive constant  $\rho$. These new heat kernel upper bounds lead to lower bounds of the form
\[
\lambda_k \ge C_1(n,\rho,k),
\]
and 
\[
\lambda_k \ge C_2 (n,\rho,D(\bM), k)
\]
where $C_1(n,\rho,k)$ and $C_2 (n,\rho,D(\bM), k)$   have order $k^{2/n}$ when $k \to \infty$, which is  consistent with Weyl's aymptotics.

\section{Li-Yau type estimates on manifolds with positive Ricci curvature and convex boundary}

\subsection{The Neumann semigroup}

Let $\mathbb{M}$ be a $n$-dimensional smooth, compact and connected Riemannian manifold with  boundary $\partial \mathbb{M}$. Let us denote by $N$ the outward unit vector field on $\partial M$. The second fundamental form of $\partial \mathbb{M}$ is defined on vector fields tangent to $\partial \mathbb{M}$ by
\[
\Pi(X,Y)=\langle \nabla_N X, Y \rangle.
\]
The boundary is then said to be convex if $\Pi \ge 0$ as a symmetric bilinear form. Throughout this paper, we  will assume that the boundary $\partial \mathbb{M}$ is convex.  We shall moreover assume that the Ricci curvature tensor of $\bM$ satisfies $\mathbf{Ric}  \ge \rho $ for some $\rho >0$.

\

Let $\Delta$ be the Laplace-Beltrami operator of $\bM$, with the sign convention that makes $\Delta$ a non positive symmetric operator on $C^\infty_0(\bM)$. It is well-known that $\Delta$ is essentially self-adjoint  on 
\[
\mathcal{D}=\left\{ f \in C^\infty(\bM), Nf=0 \text{ on } \partial \mathbb{M}\right\}.
\] 
The Friedrichs extension of $\Delta$ is then the generator of strongly continuous Markov semigroup which is called the Neumann semigroup. We shall denote this semigroup by $(P_t)_{t \ge 0}$.

By ellipticity of $\Delta$, for every $f \in L^p (\bM)$, $1\le p \le +\infty$, $P_t f \in \mathcal{D}$, $t>0$ and
\[
\frac{\partial P_t f}{\partial t}= \Delta P_t f.
\]
Also (see for instance  \cite{SV}), if $f \in C^{2} (\bM)$ is such that $Nf \le 0$ on $ \partial \bM$, then
\begin{align}\label{Submartingale}
\frac{\partial P_t f}{\partial t} \ge   P_t \left(\Delta f \right).
\end{align}

Moreover  $(P_t)_{t \ge 0}$ has  a smooth heat kernel, that is there exists a smooth function  $p:(0,+\infty)\times \bM \times \bM \to (0,+\infty)$ such that for every $f \in L^{\infty}(\bM)$: 
\[
P_t f(x)=\int_{\bM} p(t,x,y)f(y)d\mu(y).
\]

\subsection{Li-Yau gradient type estimate and heat kernel bounds}

\begin{theorem}\label{LIYAU}
 Let $f \in C^2(\bM)$, $f>0$.  For $t  > 0$, and $x \in \bM$,
\[
\| \nabla \ln P_t f (x) \|^2 \le e^{-\frac{2\rho t}{3}}  \frac{  \Delta P_t f (x)}{P_t f (x)} +\frac{n\rho}{3} \frac{e^{-\frac{4\rho t}{3}}}{ 1-e^{-\frac{2\rho t}{3}}}.
 \]
\end{theorem}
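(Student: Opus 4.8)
The plan is to follow the semigroup interpolation scheme of Bakry--Qian, Bakry--Ledoux and Baudoin--Garofalo \cite{BQ,BL,BG1}, with the convexity of $\partial\bM$ entering only through the sign of a single boundary integral. Fix $t=T>0$ and $x\in\bM$, and for $s\in[0,T]$ put $u_s=P_{T-s}f>0$, so that $\partial_su_s=-\Delta u_s$; writing $g_s=\ln u_s$, $\Gamma(h)=\N{\nabla h}^2$, $\Gamma(h,k)=\langle\nabla h,\nabla k\rangle$ and $m_s=\Delta u_s/u_s=\Delta g_s+\Gamma(g_s)$, one has $\partial_sg_s=-m_s$. With $C^1$ weights $a,b:[0,T]\to\R$ to be chosen --- $a\ge0$, $a>0$ on $[0,T)$, $a(0)>0$, and $a,b$ vanishing to second order at $s=T$ --- I will study
\[
\Phi(s)=P_s\big(u_s\,(a(s)\,\Gamma(g_s)+b(s)\,m_s)\big)(x),\qquad s\in[0,T].
\]
Since the weights vanish at $s=T$ while $u_s,\Gamma(g_s),m_s$ stay controlled there ($f\in C^2$, $f>0$), we have $\Phi(T)=0$, whereas
\[
\Phi(0)=P_Tf(x)\Big(a(0)\,\N{\nabla\ln P_Tf(x)}^2+b(0)\,\tfrac{\Delta P_Tf(x)}{P_Tf(x)}\Big),
\]
so it suffices to bound $\Phi(0)$ from above.

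Now I differentiate. Put $G_s=a(s)\Gamma(g_s)+b(s)m_s$ and $\widetilde{\mathcal{L}}F=\Delta F+\partial_sF+2\Gamma(g_s,F)$. Integrating by parts against the Neumann heat kernel (whose normal derivative vanishes on $\partial\bM$) gives, for $s\in(0,T)$,
\[
\frac{d}{ds}\Phi(s)=P_s\big(u_s\,\widetilde{\mathcal{L}}G_s\big)(x)-\int_{\partial\bM}p(s,x,y)\,N(u_sG_s)(y)\,d\sigma(y).
\]
On $\partial\bM$ one has $Nu_s=0$, hence $N\Delta u_s=-N\partial_su_s=-\partial_s(Nu_s)=0$ and so $Nm_s=0$; therefore $N(u_sG_s)=a(s)\,u_s\,N\Gamma(g_s)$ there, and since $Ng_s=0$ on $\partial\bM$ the convexity of the boundary forces the classical inequality $N\Gamma(g_s)\le0$ (indeed $N\Gamma(g_s)=-2\,\Pi(\nabla g_s,\nabla g_s)$). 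With $a\ge0$, $u_s>0$, $p>0$ the boundary term is $\le0$, so $\frac{d}{ds}\Phi(s)\ge P_s(u_s\widetilde{\mathcal{L}}G_s)(x)$; this is the sole use of convexity. For the interior term, Bochner's formula and the evolution equations give $\widetilde{\mathcal{L}}\Gamma(g_s)=2\,\Gamma_2(g_s)$ and $\widetilde{\mathcal{L}}m_s=0$, where $\Gamma_2(h)=\tfrac12\Delta\Gamma(h)-\Gamma(h,\Delta h)$, so
\[
\widetilde{\mathcal{L}}G_s=2a(s)\,\Gamma_2(g_s)+a'(s)\,\Gamma(g_s)+b'(s)\,m_s.
\]
The hypothesis $\mathbf{Ric}\ge\rho$ together with $\N{\nabla^2h}^2\ge\tfrac1n(\Delta h)^2$ yields $\Gamma_2(g_s)\ge\rho\,\Gamma(g_s)+\tfrac1n(\Delta g_s)^2$; inserting this, replacing $\Delta g_s=m_s-\Gamma(g_s)$, and minimizing the resulting pointwise quadratic over the value of $m_s$ gives, whenever $2\rho a+a'+b'\ge0$,
\[
\widetilde{\mathcal{L}}G_s\ \ge\ (2\rho a+a'+b')\,\Gamma(g_s)-\frac{n\,b'(s)^2}{8\,a(s)}\ \ge\ -\frac{n\,b'(s)^2}{8\,a(s)}.
\]
Since $u_s>0$ and $P_s(u_s)=P_Tf$, it follows that $\frac{d}{ds}\Phi(s)\ge-\frac{n\,b'(s)^2}{8\,a(s)}P_Tf(x)$.

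Integrating over $[0,T]$ and using $\Phi(T)=0$ gives $\Phi(0)\le P_Tf(x)\int_0^T\frac{n\,b'(s)^2}{8\,a(s)}ds$, that is, after dividing by $P_Tf(x)>0$,
\[
\N{\nabla\ln P_Tf(x)}^2\ \le\ -\frac{b(0)}{a(0)}\,\frac{\Delta P_Tf(x)}{P_Tf(x)}\ +\ \frac{1}{a(0)}\int_0^T\frac{n\,b'(s)^2}{8\,a(s)}ds .
\]
It remains to choose the weights. Imposing $b'=-2\rho a-a'$ (which kills the $\Gamma(g_s)$ term above and gives $b(s)=-a(s)+2\rho\int_s^Ta$, hence $-b(0)/a(0)=1-\frac{2\rho}{a(0)}\int_0^Ta$), one is reduced to finding $a\ge0$, $a(0)>0$, vanishing to second order at $T$, with
\[
\frac{2\rho}{a(0)}\int_0^Ta=1-e^{-2\rho T/3}\qquad\text{and}\qquad\frac{n}{8a(0)}\int_0^T\frac{(a'+2\rho a)^2}{a}\,ds=\frac{n\rho}{3}\,\frac{e^{-4\rho T/3}}{1-e^{-2\rho T/3}} ,
\]
and substituting these two identities into the previous display yields precisely the estimate of the theorem. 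The step I expect to be the real work is exactly this one: producing the weights in closed form --- the exponent $2/3$ emerging when one solves the associated constant-coefficient Euler--Lagrange (equivalently Riccati) equation for the optimal $a$ --- and verifying the two identities, together with the minor technical point of justifying $\Phi(T)=0$ and the differentiation of $\Phi$ uniformly up to $s=T$. A reassuring check: as $T\to0^+$ the constant above tends to $n/(2T)$ and the inequality degenerates to $\N{\nabla\ln P_Tf}^2\le\Delta P_Tf/P_Tf+n/(2T)$, the classical Li--Yau estimate for $\mathbf{Ric}\ge0$.
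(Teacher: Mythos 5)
Your proposal is, up to a change of time variable, the same argument as the paper's. The paper works forward: it sets $\Phi(t,\cdot)=a(t)\,P_tf\,\|\nabla\ln P_tf\|^2$ with the single weight $a(t)=e^{2\rho t/3}(e^{2\rho t/3}-1)^2$ vanishing at $t=0$, considers $\Psi(t)=P_{T-t}\Phi(t,\cdot)(x)$, and replaces your second weight $b$ by a linearization $(\Delta\ln P_tf)^2\ge2\gamma\,\Delta\ln P_tf-\gamma^2$ with the Lagrange multiplier $\gamma(t)=-\tfrac{n\rho}{3}(e^{2\rho t/3}-1)^{-1}$. The identification is $a_{\mathrm{paper}}(t)=a(T-s)$ and $b'(s)=-\tfrac{4}{n}\,a_{\mathrm{paper}}\gamma$ evaluated at $t=T-s$, and your constraint $a'+b'+2\rho a=0$ is exactly the paper's equation $\tfrac{a'}{a}+\tfrac{4}{n}\gamma-2\rho=0$. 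The handling of the boundary term via $N\Gamma(g_s)=-2\Pi(\nabla g_s,\nabla g_s)\le0$ and the submartingale property of $P_{T-t}$ is likewise identical; all your intermediate inequalities (in particular $\widetilde{\mathcal L}\Gamma(g_s)=2\Gamma_2(g_s)$, $\widetilde{\mathcal L}m_s=0$, and the quadratic minimization yielding $\widetilde{\mathcal L}G_s\ge-\tfrac{n(b')^2}{8a}$) check out.

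The one step you leave undone --- exhibiting the weight in closed form and verifying the two integral identities --- is genuinely done in the paper: with $a(s)=e^{2\rho(T-s)/3}\bigl(e^{2\rho(T-s)/3}-1\bigr)^2$ one has $\int_0^T a\,ds=\tfrac{(e^{2\rho T/3}-1)^3}{2\rho}$, so $\tfrac{2\rho}{a(0)}\int_0^T a=1-e^{-2\rho T/3}$, and $\tfrac{n}{8a(0)}\int_0^T\tfrac{(a'+2\rho a)^2}{a}ds=\tfrac{2}{n\,a(0)}\int_0^T a_{\mathrm{paper}}\gamma^2\,dt=\tfrac{n\rho}{3}\,\tfrac{e^{-4\rho T/3}}{1-e^{-2\rho T/3}}$, exactly as you require. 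With that weight supplied, your proof is complete and is the paper's proof in a slightly different (Bakry--Ledoux two-weight) notation.
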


\begin{proof}
Consider the functional
\[
\Phi (t,x)= a(t) (P_t f)(x)\| \nabla \ln P_t f  (x)\|^2 , \quad t > 0, x \in  \bM,
\]
where
\[
a(t)=e^{\frac{2\rho t}{3}}\left(e^{\frac{2\rho t}{3}} - 1 \right)^2.
\]
Since $f \in C^2(\bM)$, let  us first observe that according to Qian \cite{Qi}, $\| \nabla P_t f \|^2 (x) \le e^{-2\rho t} P_t \| \nabla f \|^2 (x)$, so that we have, uniformly on $\bM$,
\begin{align}\label{limit condition}
\lim_{t \to 0} \Phi (t,x)=0.
\end{align}
We now compute
\begin{align}\label{variation}
\frac{\partial  \Phi}{\partial t}= L\Phi+\frac{a'(t)}{a(t)}  \Phi - a(t) (P_t f)(x) \left( \Delta \| \nabla \ln  P_t f  \|^2-2\langle \nabla \ln  P_t f, \nabla \Delta \ln P_t f \rangle \right).
\end{align}
This computation made be performed by using the so-called $\Gamma_2$-calculus developed in \cite{BE}. More precisely, denote for functions $u$ and $v$,
\[
\Gamma(u,v)=\frac{1}{2}\left( \Delta(uv)-u\Delta v-v \Delta u \right)=\langle \nabla u,\nabla v \rangle,
\]
and
\[
\Gamma_2(u,v)=\frac{1}{2}\left( \Delta \Gamma(u,v)-\Gamma(u, \Delta v)-\Gamma(v, \Delta u) \right).
\]
Using then the change of variable formula (see \cite{B} or \cite{BL} ),
\[
\Gamma_2(\ln u,\ln u)=\frac{1}{u^2} \Gamma_2 (u,u)-\frac{1}{u^3} \Gamma(u, \Gamma(u,u))+\frac{1}{u^4}\Gamma(u,u)^2
\]
and $ \frac{\partial P_t f}{\partial t}= \Delta P_t f$ yields (\ref{variation}). Now, from Bochner's formula, we have
\[
 \Delta \| \nabla \ln  P_t f  \|^2-2\langle \nabla \ln  P_t f, \nabla \Delta \ln P_t f \rangle =2 \| \nabla^2  \ln  P_t f  \|^2+2\mathbf{Ric}(\nabla  \ln  P_t f ,\nabla  \ln  P_t f ),
\]
and Cauchy-Schwarz inequality implies,
\[
\| \nabla^2  \ln  P_t f  \|^2 \ge \frac{1}{n} (\Delta  \ln  P_t f )^2.
\]
Since by assumption
\[
\mathbf{Ric}(\nabla \ln P_t f,\nabla \ln P_t f) \ge \rho \| \nabla \ln P_t f \|^2,
\]
 we obtain therefore
\begin{align}\label{hj}
\frac{\partial  \Phi}{\partial t} \le  \Delta \Phi+ L\Phi+\frac{a'(t)}{a(t)}  \Phi - a(t) (P_t f)(x) \left( \frac{2}{n} (\Delta \ln P_t f)^2 + 2\rho \| \nabla \ln P_t f \|^2 \right).
\end{align}
Now, observe that for every $\gamma \in \mathbb{R}$,
\[
(\Delta \ln P_t f)^2 \ge 2 \gamma \Delta \ln P_t f-\gamma^2=2\gamma \frac{\Delta P_t f}{P_t f} -2\gamma \| \nabla \ln P_t f \|^2 -\gamma^2.
\]
In particular, by chosing 
\[
\gamma(t)=-\frac{n \rho}{3}\frac{1}{e^{\frac{2\rho t}{3}} - 1} ,
\]
so that
\[
\frac{a'(t)}{a(t)}+\frac{4}{n}\gamma(t)-2\rho=0
\]

and then  coming back to (\ref{hj}), we infer
\[
\frac{\partial  \Phi}{\partial t} \le  \Delta \Phi -\frac{4 a(t)\gamma (t) }{n}  \Delta P_t f +2\frac{a(t)\gamma(t)^2 }{n} P_t f.
\]
We now make the crucial observation that on $\partial \bM$, 
\[
N\left( (P_t f)\| \nabla \ln P_t f  \|^2\right)=N \left( \frac{\| \nabla P_t f  \|^2}{P_t f}\right)= -\frac{NP_t f}{(P_t f)^2} \| \nabla P_t f  \|^2 +\frac{N \| \nabla P_t f  \|^2}{P_t f}=\frac{N \| \nabla P_t f  \|^2}{P_t f},
\]
and, that by the  convexity assumption,
\[
N \| \nabla P_t f  \|^2 =-2\Pi (\nabla P_t f , \nabla P_t f) \le 0.
\]
As a conclusion, on $\partial \bM$, we have
\begin{align}\label{direction}
N\Phi \le 0.
\end{align}
We fix now $T>0$, $x \in \bM$ and consider
\[
\Psi(t)= (P_{T-t} \Phi) (x).
\]
As a consequence of ($\ref{Submartingale}$) and (\ref{direction}), we thus get
\begin{align*}
\Psi'(t)&  \le P_{T-t} \left( \frac{\partial \Phi}{\partial t} -\Delta \Phi \right)(x) \\
 &  \le P_{T-t} \left(-\frac{4 a(t)\gamma (t) }{n}  \Delta P_t f +2\frac{a(t)\gamma(t)^2 }{n} P_t f \right)(x) \\
  & \le -\frac{4 a(t)\gamma (t) }{n}   \Delta P_T f (x) +2\frac{a(t)\gamma(t)^2 }{n}  P_T f(x)
\end{align*}
We now integrate the previous inequality from $0$ to $T$, use (\ref{limit condition}), and end up with
\[
\Phi(T,x) \le -\int_0^T \frac{4\gamma (t) }{n}a
(t) dt \Delta P_T f (x) +\frac{2}{n} \int_0^T \gamma(t)^2 a(t) dt  P_T f(x).
\]
Since 
\[
a(t)=e^{\frac{2\rho t}{3}}\left(e^{\frac{2\rho t}{3}} - 1 \right)^2.
\]
the conclusion is reached by computing 
\[
\int_0^T  a(t)\gamma (t) dt=-\frac{n }{4} \left( e^{\frac{2\rho T}{3}} -1\right)^2
\]
and 
\[
\int_0^Ta(t)\gamma (t)^2 dt=\frac{n^2 \rho}{6} \left(e^{\frac{2\rho T}{3}} - 1 \right)
\]
\end{proof}

\begin{remark}

\

\begin{itemize}
\item In \cite{BG1},  in the case where the manifold has no boundary,  the same inequality was obtained  as a by product of a class of more general Li-Yau type inequalities.
\item In the case $\rho =0$, considering the functional 
\[
\Phi (t,x)=t^2 (P_t f)(x)\| \nabla \ln P_t f  (x)\|^2 , \quad t \ge 0, x \in  \bM,
\]
would lead to the celebrated Li-Yau inequality for the Neumann semigroup on manifolds with convex boundaries (see \cite{LY}, \cite{BQ}):
\[
\| \nabla \ln P_t f (x) \|^2 \le   \frac{  \Delta P_t f (x)}{P_t f (x)} +\frac{n}{2t} .
\]
\item In the case $\rho=0$, a Li-Yau type inequality is obtained in \cite{W} without the assumption that the boundary is convex.  
\end{itemize}
\end{remark}

\subsection{Harnack inequality}

As is well-known since Li-Yau \cite{LY}, gradients bounds like Theorem \ref{LIYAU}, imply by integrating along geodesics a Harnack inequality for the heat semigroup:

\begin{theorem}
Let $f \in L^\infty(\bM)$, $f >0$. For $0\le s<t$ and $x,y \in \bM$,
\[
 P_s f(x) \le  \left( \frac{1-e^{-\frac{2\rho t}{3}}}{1-e^{-\frac{2\rho s}{3}} }\right)^{n/2} e^{\frac{\rho}{6} \frac{d(x,y)^2}{ e^{\frac{2\rho t}{3}}-e^{\frac{2\rho s}{3} }}  }P_t f (y).
\]
\end{theorem}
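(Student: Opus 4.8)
\emph{Approach.} The assertion is the standard consequence of the pointwise gradient estimate of Theorem \ref{LIYAU}, obtained by integrating that estimate along a space--time curve joining $(s,x)$ to $(t,y)$, exactly as in Li--Yau \cite{LY}. I would first reduce to the case $f\in C^2(\bM)$, $f>0$: once the inequality is known for such $f$, one applies it with $P_\varepsilon f$ in place of $f$ (which is smooth and strictly positive for $\varepsilon>0$, the heat kernel being positive) and with $(s,t)$ replaced by $(s-\varepsilon,t-\varepsilon)$, then lets $\varepsilon\to 0^+$, using continuity of the explicit constant and of $\tau\mapsto P_\tau f$ on $(0,\infty)$; the case $s=0$ is vacuous since the constant is then infinite.

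\emph{Main computation.} Fix $f\in C^2(\bM)$, $f>0$, $0<s<t$ and $x,y\in\bM$, and pick a Lipschitz minimizing curve $\gamma\colon[s,t]\to\bM$ from $x$ to $y$ (it exists by compactness of $\bM$), parametrized so that $\|\dot\gamma(\tau)\|$ is proportional to $e^{2\rho\tau/3}$. Set $\varphi(\tau)=\ln P_\tau f(\gamma(\tau))$. Using $\partial_\tau P_\tau f=\Delta P_\tau f$, the chain rule, and Cauchy--Schwarz,
\[
\varphi'(\tau)=\frac{\Delta P_\tau f}{P_\tau f}(\gamma(\tau))+\langle\nabla\ln P_\tau f(\gamma(\tau)),\dot\gamma(\tau)\rangle\ge\frac{\Delta P_\tau f}{P_\tau f}(\gamma(\tau))-\|\nabla\ln P_\tau f(\gamma(\tau))\|\,\|\dot\gamma(\tau)\|.
\]
Solving the estimate of Theorem \ref{LIYAU} for $\Delta P_\tau f/P_\tau f$, substituting, and minimizing the quadratic $e^{2\rho\tau/3}u^2-\|\dot\gamma(\tau)\|u$ over $u=\|\nabla\ln P_\tau f\|\ge 0$ (minimum value $-\tfrac14 e^{-2\rho\tau/3}\|\dot\gamma(\tau)\|^2$), I obtain the differential inequality
\[
\varphi'(\tau)\ge-\frac14\, e^{-2\rho\tau/3}\|\dot\gamma(\tau)\|^2-\frac{n\rho}{3}\,\frac{e^{-2\rho\tau/3}}{1-e^{-2\rho\tau/3}}.
\]
Integrating over $[s,t]$ yields $\ln\big(P_t f(y)/P_s f(x)\big)$ on the left. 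On the right, the substitution $w=1-e^{-2\rho\tau/3}$ turns the last term into $\tfrac n2\ln\frac{1-e^{-2\rho t/3}}{1-e^{-2\rho s/3}}$, while Cauchy--Schwarz in $\tau$ gives $\int_s^t e^{-2\rho\tau/3}\|\dot\gamma(\tau)\|^2\,d\tau\ge d(x,y)^2\big/\int_s^t e^{2\rho\tau/3}\,d\tau$, with equality for the chosen speed profile; since $\int_s^t e^{2\rho\tau/3}\,d\tau=\tfrac{3}{2\rho}\big(e^{2\rho t/3}-e^{2\rho s/3}\big)$, this first term contributes $\tfrac{\rho}{6}\, d(x,y)^2\big/\big(e^{2\rho t/3}-e^{2\rho s/3}\big)$. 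Exponentiating the resulting inequality gives the claim.

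\emph{Main obstacle.} There is no delicate estimate here; the only point deserving care is the interaction with the boundary. The minimizing curve $\gamma$ may meet $\partial\bM$, but the gradient estimate of Theorem \ref{LIYAU} holds at \emph{every} point of $\bM$, boundary included (this is precisely where the convexity hypothesis was spent, in the step $N\Phi\le 0$), so the pointwise inequality for $\varphi'$ remains valid there; moreover $\gamma$ is only Lipschitz, which is enough for $\varphi$ to be Lipschitz and for the fundamental theorem of calculus to apply. Everything else is the classical Li--Yau path-integration argument together with the elementary optimization over reparametrizations of a fixed minimizing curve.
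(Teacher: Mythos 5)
Your proof is correct and follows essentially the same Li--Yau path-integration argument as the paper: both use the gradient estimate of Theorem~\ref{LIYAU} to derive the pointwise lower bound on $\varphi'(\tau)$ (you by minimizing the quadratic in $\|\nabla\ln P_\tau f\|$, the paper by the equivalent Young inequality with $\lambda=\sqrt{a(u)/2}$), integrate, and identify the optimal path by the same Cauchy--Schwarz reparametrization. The minor presentational differences --- choosing the optimal speed profile $\|\dot\gamma\|\propto e^{2\rho\tau/3}$ upfront instead of optimizing at the end, and your time-shift trick $(s-\varepsilon,t-\varepsilon)$ with $P_\varepsilon f$ for the $L^\infty$ reduction in place of the paper's $\tau\to 0$ limit --- are cosmetic.
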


\begin{proof}
We first assume that $f \in C^2(\bM)$. Let $x,y \in \bM$ and let $\gamma:[s,t] \to \bM$, $s<t$ be an absolutely continuous path such that $\gamma(s)=x, \gamma(t)=y$.
We write Theorem \ref{LIYAU} in the form
\begin{align}\label{LIYAU2}
\| \nabla \ln P_u f (x) \|^2 \le a(u) \frac{  \Delta P_u f (x)}{P_u f (x)} +b(u),
 \end{align}
 where 
 \[
 a(u)=e^{-\frac{2\rho u}{3}}, 
 \]
 and 
 \[
 b(u)=\frac{n\rho}{3} \frac{e^{-\frac{4\rho u}{3}}}{ 1-e^{-\frac{2\rho u}{3}}}.
 \]
 Let us now consider
 \[
 \phi(u)=\ln P_u f(\gamma(u)).
 \]
 We  compute
 \[
  \phi(u)= ( \partial_u \ln P_u  f) (\gamma(u))+\langle \nabla \ln P_u f (\gamma(u)),\gamma'(u) \rangle.
 \]
 Now, for every $\lambda >0$, we have
 \[
 \langle \nabla \ln P_u f (\gamma(u)),\gamma'(u) \rangle \ge -\frac{1}{2\lambda^2} \| \nabla \ln P_u f (x) \|^2 -\frac{\lambda^2}{2} \| \gamma'(u) \|^2.
 \]
 Choosing  $\lambda=\sqrt{\frac{a(u)}{2} }$ and using then (\ref{LIYAU2}) yields
 \[
 \phi'(u) \ge -\frac{a(u)}{b(u)} -\frac{1}{4} a(u) \| \gamma'(u) \|^2.
 \]
 By integrating this inequality from $s$ to $t$ we get as a result.
 \[
 \ln P_tf(y)-\ln P_s f(x)\ge -\int_s^t \frac{a(u)}{b(u)} du  -\frac{1}{4} \int_s^t a(u) \| \gamma'(u) \|^2 du.
 \]
We now minimize the quantity  $\int_s^t a(u) \| \gamma'(u) \|^2 du$ over the set of absolutely continuous paths such that $\gamma(s)=x, \gamma(t)=y$. By using reparametrization of paths, it is seen that
 \[
 \int_s^t a(u) \| \gamma'(u) \|^2 du \le \frac{d^2(x,y)}{\int_s^t \frac{dv}{a(v)}},
 \]
 with equality achieved for $\gamma(u)=\sigma\left( \frac{\int_s^u \frac{dv}{a(v)}}{\int_s^t \frac{dv}{a(v)}} \right)$ where $\sigma:[0,1] \to \bM$ is a unit geodesic joining $x$ and $y$. As a conclusion,
 \[
 P_sf(x) \le \exp\left( \int_s^t \frac{a(u)}{b(u)} du + \frac{d^2(x,y)}{4\int_s^t \frac{dv}{a(v)}}  \right) P_tf(y).
 \]
 Using finally the expressions of $a$ and $b$ leads to  
 \[
 P_s f(x) \le  \left( \frac{1-e^{-\frac{2\rho t}{3}}}{1-e^{-\frac{2\rho s}{3}} }\right)^{n/2} e^{\frac{\rho}{6} \frac{d(x,y)^2}{ e^{\frac{2\rho t}{3}}-e^{\frac{2\rho s}{3} }}  }P_t f (y).
 \]
 If $f \in L^\infty(\bM)$ but  $ f \notin C^2(\bM)$, in the previous argument we replace $f$ by $P_\tau f$, $\tau >0$ and, at the end,  let $\tau \to 0$.
\end{proof}

As a straightforward corollary, we get a Harnack inequality for the Neumann heat kernel:

\begin{corollary}\label{HK}
Let $p(t,x,y)$ be the Neumann heat kernel of $\bM$.  For $0< s<t$ and $x,y,z \in \bM$,
\[
p(s,x,y) \le  \left( \frac{1-e^{-\frac{2\rho t}{3}}}{1-e^{-\frac{2\rho s}{3}} }\right)^{n/2} e^{\frac{\rho}{6} \frac{d(y,z)^2}{ e^{\frac{2\rho t}{3}}-e^{\frac{2\rho s}{3} }}  }p(t,x,z).
\]
\end{corollary}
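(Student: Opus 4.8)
The plan is to deduce this from the Harnack inequality of the preceding theorem, applied to the function $f=p(\delta,x,\cdot)$ for a small parameter $\delta\in(0,s)$. Since $\bM$ is compact and $\delta>0$, this $f$ is smooth, bounded and strictly positive, so the hypotheses of that theorem are satisfied.

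The first step is to record the identity
\[
P_u\bigl(p(\delta,x,\cdot)\bigr)(w)=p(u+\delta,x,w),\qquad u\ge 0,\ w\in\bM .
\]
This follows from the semigroup (Chapman--Kolmogorov) property $p(u+\delta,w,x)=\int_{\bM}p(u,w,v)\,p(\delta,v,x)\,d\mu(v)$ together with the symmetry $p(r,a,b)=p(r,b,a)$ of the Neumann heat kernel, the latter being a consequence of the self-adjointness of $P_r$.

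The second step is to invoke the preceding theorem with this choice of $f$, with the two times $s-\delta<t-\delta$ and the two points $y$ and $z$. Using the identity of the first step on both sides, the inequality reads
\[
p(s,x,y)\le \left(\frac{1-e^{-\frac{2\rho(t-\delta)}{3}}}{1-e^{-\frac{2\rho(s-\delta)}{3}}}\right)^{n/2}\exp\!\left(\frac{\rho}{6}\,\frac{d(y,z)^2}{e^{\frac{2\rho(t-\delta)}{3}}-e^{\frac{2\rho(s-\delta)}{3}}}\right)p(t,x,z).
\]

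Finally, I would let $\delta\to 0^+$. Because $s>0$ and $t>s$, the quantities $1-e^{-\frac{2\rho(s-\delta)}{3}}$ and $e^{\frac{2\rho(t-\delta)}{3}}-e^{\frac{2\rho(s-\delta)}{3}}$ remain bounded away from $0$ as $\delta\to 0$, so each factor on the right-hand side converges to its value at $\delta=0$ and the stated inequality follows. I expect essentially no obstacle here; the only points needing a little care are the use of heat kernel symmetry in the first step and the restriction $\delta<s$, which ensures that the time $s-\delta$ is (strictly) positive, so that the bound being applied is non-degenerate in the limit.
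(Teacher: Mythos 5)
Your argument is correct, and it supplies the routine deduction that the paper omits (the paper simply labels Corollary~\ref{HK} a ``straightforward corollary'' of the Harnack inequality and gives no proof). Taking $f=p(\delta,x,\cdot)$, noting $P_u f(w)=p(u+\delta,x,w)$ via Chapman--Kolmogorov and the symmetry of the Neumann kernel, applying the theorem at times $s-\delta<t-\delta$ and points $y,z$, and then letting $\delta\to0^+$ is exactly the natural route; your observation that $s>0$ and $t>s$ keep the denominators bounded away from $0$ is the only point that needs care, and you handle it.

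One minor remark: you could avoid shifting the times by applying the theorem at times $s$ and $t$ directly, which yields $p(s+\delta,x,y)\le(\cdots)\,p(t+\delta,x,z)$ with the constants already in final form, and then let $\delta\to0^+$ using continuity of the kernel in time; this is marginally cleaner since only the kernel values, not the constants, depend on $\delta$. Either way the conclusion is the same.
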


\subsection{On diagonal heat kernel estimates}

We now prove on-diagonal heat kernel estimates for the Neumann heat kernel that stem from the previous  Harnack inequalities. We shall essentially focus on two types of estimates: Estimates that only depend on the curvature parameter $\rho$ or estimates that depend on $\rho$ and the diameter of $\bM$.

\begin{proposition}\label{jhn}
Let $p(t,x,y)$ be the Neumann heat kernel of $\bM$. For $t>0$, $x \in \bM$,
\[
\left( \frac{\rho}{6\pi} \right)^{n/2} \frac{1}{\left( 1-e^{-\frac{2\rho t}{3}} \right)^{n/2} } \le p(t,x,x) \le \frac{1}{\mu(\bM)}  \frac{1}{\left( 1-e^{-\frac{2\rho t}{3}} \right)^{n/2} }.
\]
\end{proposition}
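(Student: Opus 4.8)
The plan is to reduce both inequalities to the monotonicity of a single function. Fix $x \in \bM$ and set
\[
g(t) = p(t,x,x)\left(1-e^{-\frac{2\rho t}{3}}\right)^{n/2}, \qquad t > 0.
\]
Applying Corollary \ref{HK} with $y = z = x$, so that the exponential Harnack factor equals $1$, gives for $0 < s < t$
\[
p(s,x,x) \le \left( \frac{1-e^{-\frac{2\rho t}{3}}}{1-e^{-\frac{2\rho s}{3}}} \right)^{n/2} p(t,x,x),
\]
that is, $g(s) \le g(t)$. Hence $g$ is non-decreasing on $(0,+\infty)$, so that for every $t>0$ one has $\lim_{s\to 0^+}g(s) \le g(t) \le \lim_{s\to+\infty}g(s)$ (both limits existing by monotonicity). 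Dividing by $(1-e^{-2\rho t/3})^{n/2}$, the Proposition becomes equivalent to the two statements $\lim_{s\to +\infty} g(s) \le \frac{1}{\mu(\bM)}$ and $\lim_{s\to 0^+} g(s) \ge \left( \frac{\rho}{6\pi}\right)^{n/2}$.

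For the limit at infinity I would use elementary spectral theory. Since $\bM$ is compact and connected, the Neumann Laplacian $-\Delta$ has discrete spectrum $0 = \lambda_0 < \lambda_1 \le \cdots$ and an $L^2(\bM,\mu)$-orthonormal basis of eigenfunctions $(\phi_k)_{k\ge 0}$ with $\phi_0 \equiv \mu(\bM)^{-1/2}$, whence $p(t,x,x) = \sum_{k\ge 0} e^{-\lambda_k t}\phi_k(x)^2 \to \phi_0(x)^2 = \mu(\bM)^{-1}$ as $t \to +\infty$ (dominating the tail by $p(1,x,x)$ for $t\ge 1$). Since also $1-e^{-\frac{2\rho t}{3}} \to 1$, we get $\lim_{t\to+\infty} g(t) = \mu(\bM)^{-1}$, which yields the upper bound. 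Equivalently, one may let $t\to+\infty$ directly in Corollary \ref{HK}, using $p(t,x,z)\to\mu(\bM)^{-1}$ and that the exponential factor tends to $1$.

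For the limit at $0^+$ I would invoke the classical short-time on-diagonal asymptotics of the heat kernel: for $x$ in the interior of $\bM$, $(4\pi t)^{n/2} p(t,x,x) \to 1$ as $t\to 0^+$. Combined with $1-e^{-\frac{2\rho t}{3}} \sim \frac{2\rho t}{3}$, this gives
\[
\lim_{t\to 0^+} g(t) = \left(\frac{1}{4\pi}\right)^{n/2}\left(\frac{2\rho}{3}\right)^{n/2} = \left(\frac{\rho}{6\pi}\right)^{n/2},
\]
and hence $g(t) \ge \left(\frac{\rho}{6\pi}\right)^{n/2}$ for every $t > 0$, which is the lower bound. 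Since the claimed lower bound does not depend on $x$, the case $x \in \partial\bM$ follows from the continuity of $x\mapsto p(t,x,x)$ and the density of the interior of $\bM$ (or from the reflection principle, which gives the even larger value $(4\pi t)^{n/2}p(t,x,x)\to 2$ at boundary points, only helping the inequality). The only genuine input beyond Corollary \ref{HK} and spectral theory is this short-time asymptotic, and it is precisely where the constant $\frac{\rho}{6\pi}$ originates; everything else is a formal consequence of the Harnack inequality.
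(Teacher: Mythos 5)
Your proof is correct and follows essentially the same route as the paper: the key ingredients are the Harnack inequality of Corollary \ref{HK} specialized to $y=z=x$, the large-time limit $p(t,x,x)\to\mu(\bM)^{-1}$, and the short-time asymptotic $(4\pi t)^{n/2}p(t,x,x)\to 1$. Packaging these via the monotone function $g(t)=p(t,x,x)(1-e^{-2\rho t/3})^{n/2}$ is a clean cosmetic reorganization of the paper's argument (which directly lets $t\to\infty$ and $s\to 0$ in the two-time Harnack bound), and your extra remark that the boundary case only improves the lower bound is a point the paper glosses over.
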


\begin{proof}
From Corollary \ref{HK},  for $0\le s<t$ and $x,y \in \bM$,
\begin{align}\label{HK2}
 p(s,x,x) \le  \left( \frac{1-e^{-\frac{2\rho t}{3}}}{1-e^{-\frac{2\rho s}{3}} }\right)^{n/2} p(t,x,x).
\end{align}
We have $\lim_{t \to +\infty} p(t,x,x)= \frac{1}{\mu(\bM)}$. Thus by letting $t \to +\infty$ in (\ref{HK2}), we get
\[
 p(s,x,x) \le \frac{1}{\mu(\bM)}  \frac{1}{\left( 1-e^{-\frac{2\rho s}{3}} \right)^{n/2} }.
\]
On the other hand, $\lim_{s \to 0} p(s,x,x) (4\pi s)^{n/2} = 1$, so by letting $s \to 0$ in (\ref{HK2}), we  deduce 
\[
p(t,x,x) \ge \left( \frac{\rho}{6\pi} \right)^{n/2} \frac{1}{\left( 1-e^{-\frac{2\rho t}{3}} \right)^{n/2} } . 
\]
\end{proof}

\begin{remark}
Interestingly, Proposition \ref{jhn} contains the geometric bound 
\[
\mu(\bM) \le \left( \frac{6\pi}{\rho} \right)^{n/2}.
\] 
This bound is not sharp since from the Bishop's volume comparison theorem the volume of $\bM$ is less than the volume of the $n$-dimensional sphere with radius $\sqrt{\frac{n-1}{\rho} }$ which is $ \frac{2 \pi^{\frac{n+1}{2} }}{\Gamma \left( \frac{n+1}{2}\right)} \left(\frac{n-1}{\rho} \right)^{n/2}$. However, by using Stirling's equivalent we observe that, when $n \to \infty$,  the ratio between $ \left( \frac{6\pi}{\rho} \right)^{n/2}$  and $ \frac{2 \pi^{\frac{n+1}{2} }}{\Gamma \left( \frac{n+1}{2}\right)} \left(\frac{n-1}{\rho} \right)^{n/2}$ only  has order $n \left( \frac{3}{e} \right)^{n/2}$.
\end{remark}

Since
\[
\int_{\bM} p(t,x,x) d\mu(x)=\sum_{k=0}^{+\infty} e^{-\lambda_k t},
\]
where $0=\lambda_0<\lambda_1 \le \lambda_2 \le \cdots \lambda_n \le  \cdots$ are the Neumann eigenvalues of $\bM$, we deduce  from the previous estimates
\begin{align}\label{estimate}
\left( \frac{\rho}{6\pi} \right)^{n/2} \frac{\mu(\bM)}{\left( 1-e^{-\frac{2\rho t}{3}} \right)^{n/2} } \le \sum_{k=0}^{+\infty} e^{-\lambda_k t} \le  \frac{1}{\left( 1-e^{-\frac{2\rho t}{3}} \right)^{n/2} }.
\end{align}

\begin{proposition}
Let $p(t,x,y)$ be the Neumann heat kernel of $\bM$. For $t>0$, $x \in \bM$,
\[
p(t,x,x)   \le \frac{\left(1+e^{-\frac{2\rho t}{3} } \right)^{n/2} e^{n/2}  }{ \mu \left(  B \left(x, \sqrt{ r(t)} \right) \right) },
\]
with $r(t)=\frac{3n}{\rho} \left( e^{\frac{4\rho t}{3}}-e^{\frac{2\rho t}{3} } \right)$.
\end{proposition}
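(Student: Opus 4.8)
The plan is to obtain the estimate directly from the Harnack inequality of Corollary~\ref{HK}, following the classical Li--Yau argument of integrating the kernel at a later time over a geodesic ball. Concretely, I would apply Corollary~\ref{HK} with $s$ replaced by $t$, with the larger time equal to $2t$, and with $y=x$: for every $z\in\bM$,
\[
p(t,x,x)\ \le\ \left(\frac{1-e^{-\frac{4\rho t}{3}}}{1-e^{-\frac{2\rho t}{3}}}\right)^{n/2} e^{\frac{\rho}{6}\,\frac{d(x,z)^2}{e^{\frac{4\rho t}{3}}-e^{\frac{2\rho t}{3}}}}\, p(2t,x,z).
\]
Choosing the later time to be exactly $2t$ is what produces the clean prefactor: writing $u=e^{-\frac{2\rho t}{3}}$ one has $(1-u^2)/(1-u)=1+u$, so the first factor equals $\left(1+e^{-\frac{2\rho t}{3}}\right)^{n/2}$, precisely the factor in the statement.

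Next I would restrict $z$ to the ball $B\!\left(x,\sqrt{r(t)}\right)$ with $r(t)=\frac{3n}{\rho}\!\left(e^{\frac{4\rho t}{3}}-e^{\frac{2\rho t}{3}}\right)$. For such $z$ we have $d(x,z)^2\le r(t)$, and the point of this particular choice of radius is the elementary identity
\[
\frac{\rho}{6}\,\frac{r(t)}{e^{\frac{4\rho t}{3}}-e^{\frac{2\rho t}{3}}}=\frac{\rho}{6}\cdot\frac{3n}{\rho}=\frac{n}{2},
\]
so that the Gaussian factor is bounded by $e^{n/2}$ uniformly in $z\in B\!\left(x,\sqrt{r(t)}\right)$. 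Hence $p(t,x,x)\le \left(1+e^{-\frac{2\rho t}{3}}\right)^{n/2} e^{n/2}\, p(2t,x,z)$ for all such $z$.

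Finally I would integrate this inequality in $z$ over $B\!\left(x,\sqrt{r(t)}\right)$ against $\mu$. The left side contributes the factor $\mu\!\left(B\!\left(x,\sqrt{r(t)}\right)\right)$, while on the right I use that the Neumann semigroup is conservative, i.e.\ $P_{2t}1=1$, so $\int_{\bM}p(2t,x,z)\,d\mu(z)=1$ and a fortiori $\int_{B(x,\sqrt{r(t)})}p(2t,x,z)\,d\mu(z)\le 1$; rearranging gives the claimed bound. I do not expect a genuine obstacle: the only things to verify are the applicability of Corollary~\ref{HK} with the diagonal choice $y=x$ (it is stated for arbitrary $x,y,z$), the positivity $\mu\!\left(B(x,\sqrt{r(t)})\right)>0$ (immediate, the ball being a nonempty open set), and the two elementary algebraic simplifications above. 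The only ``design'' input is the joint choice of the later time $2t$ and the radius $\sqrt{r(t)}$, tuned so that the two error terms become exactly $\left(1+e^{-\frac{2\rho t}{3}}\right)^{n/2}$ and $e^{n/2}$.
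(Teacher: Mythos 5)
Your argument is correct and is essentially identical to the paper's proof: apply Corollary~\ref{HK} with times $t$ and $2t$ at the point $y=x$, restrict $z$ to the ball of radius $\sqrt{r(t)}$ so that both the time-ratio prefactor and the Gaussian factor simplify to $\left(1+e^{-\frac{2\rho t}{3}}\right)^{n/2}$ and $e^{n/2}$ respectively, then integrate over the ball and use $\int_{\bM}p(2t,x,z)\,d\mu(z)\le 1$. No gap and no genuinely different idea.
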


\begin{proof}
From Corollary \ref{HK},
\[
p(t,x,x) \le  \left( \frac{1-e^{-\frac{4\rho t}{3}}}{1-e^{-\frac{2\rho t}{3}} }\right)^{n/2} e^{\frac{\rho}{6} \frac{d(x,y)^2}{ e^{\frac{4\rho t}{3}}-e^{\frac{2\rho t}{3} }}  }p(2t,x,y).
\]
Thus, for $y \in B \left(x, \sqrt{ \frac{3n}{\rho} \left( e^{\frac{4\rho t}{3}}-e^{\frac{2\rho t}{3} } \right)} \right)$,
\[
p(t,x,x) \le \left(1+e^{-\frac{2\rho t}{3} } \right)^{n/2} e^{n/2} p(2t,x,y).
\]
Integrating with respect to $y$ over the ball  $ B \left(x, \sqrt{ \frac{3n}{\rho} \left( e^{\frac{4\rho t}{3}}-e^{\frac{2\rho t}{3} } \right)} \right)$ therefore yields
\begin{align*}
p(t,x,x) & \le \frac{\left(1+e^{-\frac{2\rho t}{3} } \right)^{n/2} e^{n/2}  }{ \mu \left(  B \left(x, \sqrt{ \frac{3n}{\rho} \left( e^{\frac{4\rho t}{3}}-e^{\frac{2\rho t}{3} } \right)} \right)\right) } \int_{ B \left(x, \sqrt{ \frac{3n}{\rho} \left( e^{\frac{4\rho t}{3}}-e^{\frac{2\rho t}{3} } \right)} \right) } p(2t,x,y)\mu(dy) \\
 &  \le \frac{\left(1+e^{-\frac{2\rho t}{3} } \right)^{n/2} e^{n/2}  }{ \mu \left(  B \left(x, \sqrt{ \frac{3n}{\rho} \left( e^{\frac{4\rho t}{3}}-e^{\frac{2\rho t}{3} } \right)} \right)\right) }
\end{align*}
\end{proof}

Combining the previous estimate with the Bishop-Gromov comparison theorem yields the following upper bound estimate for the heat kernel, which in small times, may be better than the upper bound of Proposition \ref{jhn}. 

\begin{corollary}\label{lk}
Let $p(t,x,y)$ be the Neumann heat kernel of $\bM$. Denote $D(\bM)$ the diameter of $\bM$ and consider
\[
\tau=\frac{3}{2\rho} \ln \left( \frac{ 1+\sqrt{1+\frac{ 4\rho D(\bM)^2}{3n } } }{2} \right)
\]

For $x \in \bM$:
\begin{itemize}
\item If $0 < t \le \tau$,
\[
p(t,x,x)   \le\frac{\left(1+e^{-\frac{2\rho t}{3} } \right)^{n/2} e^{n/2}  }{ \mu \left(  \bM \right) } \frac{V_\rho ( D(\bM) )}{V_\rho ( \sqrt{r(t)} ) },
\]
with $r(t)=\frac{3n}{\rho} \left( e^{\frac{4\rho t}{3}}-e^{\frac{2\rho t}{3} } \right)$.
\item If $t \ge \tau$,
\[
p(t,x,x)   \le \frac{\left(1+e^{-\frac{2\rho t}{3} } \right)^{n/2} e^{n/2}  }{ \mu \left(  \bM \right) },
\]
\end{itemize}
where
\[
V_\rho ( s)= \int_0^s \sin^{n-1} \left( \sqrt{\frac{\rho}{n-1} }u \right) du.
\]
\end{corollary}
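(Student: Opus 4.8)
The plan is to substitute the ball‑volume upper bound of the proposition preceding this corollary into the Bishop--Gromov volume comparison theorem, splitting into two cases according to whether the radius $\sqrt{r(t)}$ does or does not exceed the diameter $D(\bM)$.

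\textbf{Step 1: identifying the threshold $\tau$.} Set $u=e^{2\rho t/3}\ge 1$. Then $r(t)=\tfrac{3n}{\rho}(u^{2}-u)$, so the inequality $r(t)\le D(\bM)^{2}$ is equivalent to $u^{2}-u-\tfrac{\rho D(\bM)^{2}}{3n}\le 0$, whose positive root is $u=\tfrac12\big(1+\sqrt{1+4\rho D(\bM)^{2}/(3n)}\big)$. Taking logarithms shows that $r(t)\le D(\bM)^{2}$ holds precisely when $t\le\tau$; equivalently $\sqrt{r(t)}\le D(\bM)$ iff $t\le\tau$ and $\sqrt{r(t)}\ge D(\bM)$ iff $t\ge\tau$.

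\textbf{Step 2: the geometric input.} Since $\mathbf{Ric}\ge\rho=(n-1)\cdot\tfrac{\rho}{n-1}$, the Bishop--Gromov theorem asserts that $R\mapsto \mu(B(x,R))/V_\rho(R)$ is non-increasing, where $V_\rho(R)=\int_0^R\sin^{n-1}(\sqrt{\rho/(n-1)}\,u)\,du$ is, up to the fixed ($R$-independent) constant $\omega_{n-1}(\rho/(n-1))^{-(n-1)/2}$ which cancels in the ratio, the volume of a geodesic ball of radius $R$ in the space form of curvature $\rho/(n-1)$. By Bonnet--Myers $D(\bM)\le\pi\sqrt{(n-1)/\rho}$, so every radius appearing below lies in the range where $V_\rho$ is increasing and the comparison is not vacuous; moreover $\mu(B(x,D(\bM)))=\mu(\bM)$ because every point of $\bM$ is within distance $D(\bM)$ of $x$.

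\textbf{Step 3: conclusion.} If $0<t\le\tau$ then $\sqrt{r(t)}\le D(\bM)$, and applying the monotonicity of Step 2 with $r=\sqrt{r(t)}$ and $R=D(\bM)$ gives
\[
\mu\big(B(x,\sqrt{r(t)})\big)\ \ge\ \mu(\bM)\,\frac{V_\rho(\sqrt{r(t)})}{V_\rho(D(\bM))}.
\]
Inserting this lower bound into $p(t,x,x)\le (1+e^{-2\rho t/3})^{n/2}e^{n/2}/\mu(B(x,\sqrt{r(t)}))$ yields the first displayed inequality. If $t\ge\tau$ then $\sqrt{r(t)}\ge D(\bM)$, so $B(x,\sqrt{r(t)})\supseteq B(x,D(\bM))=\bM$ and $\mu(B(x,\sqrt{r(t)}))=\mu(\bM)$; substituting into the same proposition gives the second displayed inequality. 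I do not anticipate a genuine obstacle: the only points requiring care are the algebra of Step 1 that pins down $\tau$ as the break-even time, and the bookkeeping of the $R$-independent normalizing factor in $V_\rho$, which is harmless because Bishop--Gromov is a statement about ratios.
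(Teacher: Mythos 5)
Your proof is correct and is exactly the argument the paper intends: the paper states the corollary without a written proof, saying only that it follows by combining the preceding proposition with Bishop--Gromov, and your three steps (identify $\tau$ as the time at which $\sqrt{r(t)}=D(\bM)$, invoke Bishop--Gromov monotonicity together with $\mu(B(x,D(\bM)))=\mu(\bM)$, then split into the two cases) supply precisely that verification.
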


\section{Lower bounds for the eigenvalues}

Heat kernel upper bounds are a well-known device to prove lower bounds on the spectrum (see \cite{DL}, \cite{LY}). We therefore apply the results of the previous Section and obtain:

\begin{theorem}
Let $0=\lambda_0<\lambda_1 \le \lambda_2 \le \cdots \lambda_k \le  \cdots$ be the Neumann eigenvalues of $\bM$. 
 \begin{itemize}
\item For every $k \in \mathbb{N}$,
\[
\lambda_k \ge -\frac{ n \rho}{3 \ln \left( 1-\frac{1}{ (1+e^{-n/2}k)^{2/n} } \right) }
\]
\item For every $k \in \mathbb{N}$, $k >2^{n/2} e^n-e^{n/2}$,
\[
\lambda_k \ge \frac{ n \rho}{3 \ln \left( \frac{ 1+\sqrt{1+\frac{4\rho}{3n} V_\rho^{-1} \left( \frac{(2e)^{n/2}}{1+ke^{-n/2} } V_\rho(D(\bM))\right)^2 } }{2} \right) }
\]
\end{itemize}

\end{theorem}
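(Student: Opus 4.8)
The plan is to combine the two on-diagonal heat kernel upper bounds of the previous section with the spectral identity $\int_{\bM}p(t,x,x)\,d\mu(x)=\sum_{j\ge0}e^{-\lambda_j t}$, and then to evaluate the resulting scalar inequalities at the single well-chosen time $t=\frac{n}{2\lambda_k}$. Two elementary facts feed the argument: since $\lambda_0=0$ and $\lambda_1,\dots,\lambda_k\le\lambda_k$ we have $\sum_{j\ge0}e^{-\lambda_j t}\ge 1+ke^{-\lambda_k t}$ for every $t>0$; and $V_\rho$ is strictly increasing on $[0,D(\bM)]$ (recall $D(\bM)\le\pi\sqrt{(n-1)/\rho}$ by Bonnet--Myers, so this is the range in which $V_\rho'=\sin^{n-1}(\sqrt{\rho/(n-1)}\,\cdot)>0$), while $t\mapsto r(t)=\frac{3n}{\rho}\big(e^{4\rho t/3}-e^{2\rho t/3}\big)$ is strictly increasing with $r(0)=0$ and $r(\tau)=D(\bM)^2$ (the latter by solving the quadratic $w^2-w=\frac{\rho D(\bM)^2}{3n}$ in $w=e^{2\rho t/3}$). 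Using (\ref{estimate}) this yields, for fixed $k\ge1$,
\[
1+ke^{-\lambda_k t}\le \left(1-e^{-\frac{2\rho t}{3}}\right)^{-n/2},\qquad t>0,
\]
while integrating the $0<t\le\tau$ case of Corollary~\ref{lk} and using $\big(1+e^{-\frac{2\rho t}{3}}\big)^{n/2}\le 2^{n/2}$ yields
\[
1+ke^{-\lambda_k t}\le (2e)^{n/2}\,\frac{V_\rho(D(\bM))}{V_\rho\big(\sqrt{r(t)}\big)},\qquad 0<t\le\tau.
\]

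For the first bullet I would put $t=\frac{n}{2\lambda_k}$ (a legitimate positive time since $\lambda_k>0$ for $k\ge1$; for $k=0$ the asserted bound is the trivial $\lambda_0\ge0$) into the first inequality. Its two sides become $1+ke^{-n/2}$ and $\big(1-e^{-\frac{\rho n}{3\lambda_k}}\big)^{-n/2}$, so $1+ke^{-n/2}\le\big(1-e^{-\frac{\rho n}{3\lambda_k}}\big)^{-n/2}$; raising to the power $-2/n$, solving for the exponential, taking logarithms, and dividing by the negative quantity $\ln\!\big(1-(1+ke^{-n/2})^{-2/n}\big)$ gives exactly $\lambda_k\ge -\frac{n\rho}{3\ln(1-(1+ke^{-n/2})^{-2/n})}$.

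For the second bullet, assume $k>2^{n/2}e^n-e^{n/2}$; this is precisely the inequality $1+ke^{-n/2}>(2e)^{n/2}$, so $\frac{(2e)^{n/2}}{1+ke^{-n/2}}<1$ and $R:=V_\rho^{-1}\!\big(\frac{(2e)^{n/2}}{1+ke^{-n/2}}V_\rho(D(\bM))\big)$ is well defined with $0\le R\le D(\bM)$. I would again take $t=\frac{n}{2\lambda_k}$, first checking that this time lies in the range $t\le\tau$: otherwise the $t\ge\tau$ case of Corollary~\ref{lk} would give $1+ke^{-n/2}\le \big(1+e^{-\frac{\rho n}{3\lambda_k}}\big)^{n/2}e^{n/2}<(2e)^{n/2}$, contradicting the hypothesis on $k$. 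Granting $\frac{n}{2\lambda_k}\le\tau$, the second inequality at $t=\frac{n}{2\lambda_k}$ reads $V_\rho\big(\sqrt{r(n/(2\lambda_k))}\big)\le \frac{(2e)^{n/2}}{1+ke^{-n/2}}V_\rho(D(\bM))=V_\rho(R)$, hence $r(n/(2\lambda_k))\le R^2$ by monotonicity of $V_\rho$, hence $\frac{n}{2\lambda_k}\le \frac{3}{2\rho}\ln\frac{1+\sqrt{1+\frac{4\rho}{3n}R^2}}{2}$ by monotonicity of $r$ together with the quadratic identity that gives $r(t)=R^2$ at that time. Rearranging this inequality produces the second claimed bound.

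The routine work is the monotonicity bookkeeping and the logarithmic algebra of inverting $1+ke^{-n/2}\le(\cdots)^{-n/2}$ for $\lambda_k$. The one delicate point is in the second bullet: the single hypothesis $k>2^{n/2}e^n-e^{n/2}$ has to do double duty, both making $V_\rho^{-1}$ meaningful and forcing the chosen time $t=\frac{n}{2\lambda_k}$ to fall in the diameter-dependent regime $t\le\tau$ of Corollary~\ref{lk} rather than the weaker diameter-free regime $t\ge\tau$; recognizing why this is exactly the right threshold is the heart of the proof.
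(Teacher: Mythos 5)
Your proof is correct and follows essentially the same route as the paper: both bullets combine the on-diagonal heat-kernel upper bounds with the trace identity and the lower bound $\sum_j e^{-\lambda_j t}\ge 1+ke^{-\lambda_k t}$, then evaluate at $t=\frac{n}{2\lambda_k}$. Your version merely makes explicit the details the paper leaves implicit (the Bonnet--Myers remark justifying monotonicity of $V_\rho$, the quadratic identity $r(\tau)=D(\bM)^2$, and the contradiction argument showing $\frac{n}{2\lambda_k}\le\tau$ for $k>2^{n/2}e^n-e^{n/2}$), so no substantive difference.
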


\begin{proof}

\

\begin{itemize}

\item Thanks to (\ref{estimate}), we have for every $t >0$,
\[
1+ke^{-\lambda_k t} \le \sum_{k=0}^{+\infty} e^{-\lambda_k t} \le \frac{1}{\left( 1-e^{-\frac{2\rho t}{3}} \right)^{n/2} }.
\]
Choosing then $t= \frac{n}{2\lambda_k}$ yields the lower bound
\[
\lambda_k \ge -\frac{ n \rho}{3 \ln \left( 1-\frac{1}{ (1+e^{-n/2}k)^{2/n} } \right) }.
\]

\item Thanks to Corollary \ref{lk}, if $t \le \tau=\frac{3}{2\rho} \ln \left( \frac{ 1+\sqrt{1+\frac{ 4\rho D(\bM)^2}{3n } } }{2} \right)$, then
\[
1+ke^{-\lambda_k t} \le \frac{ (2e)^{n/2}  V_\rho ( D(\bM) )}{V_\rho ( \sqrt{r(t)} ) }.
\]
And, if $t \ge \tau$, then
\[
1+ke^{-\lambda_k t} \le (2e)^{n/2} .
\]
For $k >2^{n/2} e^n-e^{n/2}$, we have
\[
1+ke^{-n/2} >  (2e)^{n/2} 
\]
and thus $\frac{n}{2 \lambda_k}  \le \tau$. This implies
\[
1+ke^{- n/2} \le \frac{ (2e)^{n/2}  V_\rho ( D(\bM) )}{V_\rho \left( \sqrt{r \left( \frac{n}{2\lambda_k} \right)} \right) },
\]
and the result follows by direct computations.
\end{itemize}
\end{proof}

\begin{remark}
When $k \to \infty$, we have
\[
-\frac{ n \rho}{3 \ln \left( 1-\frac{1}{ (1+e^{-n/2}k)^{2/n} } \right) }\sim_{k \to \infty} \frac{ n \rho }{3e } k^{2/n}
\]
and
\[
\frac{ n \rho}{3 \ln \left( \frac{ 1+\sqrt{1+\frac{4\rho}{3n} V_\rho^{-1} \left( \frac{(2e)^{n/2}}{1+ke^{-n/2} } V_\rho(D(\bM))\right)^2 } }{2} \right) }
\sim_{k \to +\infty} \frac{n}{2e^2} \left( \frac{n\rho}{n-1} \right)^{1-\frac{1}{n}} \left( \frac{k}{V_\rho(D(\bM) ) }\right)^{2/n},
\]
which is consistent with the Weyl asymptotics
\[
\lambda_k^{n/2} \sim_{k \to \infty} \frac{(4 \pi)^{n/2} \Gamma \left( 1+\frac{n}{2} \right)}{ \mu(\bM)} k.
\]
\end{remark}

\end{document}